\newcommand{\Z}{\ensuremath{\mathbb{Z}}}
\newcommand{\Q}{\ensuremath{\mathbb{Q}}}
\font\tencyr=wncyr10 \def\russe{\tencyr\cyracc} 
\def\Sha{\text{\russe{Sh}}}
\theoremstyle{plain}
\newtheorem{thm}{Theorem}[section]
\newtheorem{lem}[thm]{Lemma}
\newtheorem{cor}[thm]{Corollary}
\newtheorem{prop}[thm]{Proposition}
\theoremstyle{definition}
\newtheorem{dfn}[thm]{Definition}
\newtheorem{rmq}[thm]{Remark}
\DeclareMathOperator{\Hom}{Hom}
\DeclareMathOperator{\Spec}{Spec}
\DeclareMathOperator{\Pic}{Pic}
\DeclareMathOperator{\Gal}{Gal}
\DeclareMathOperator{\Cl}{Cl}
\DeclareMathOperator{\Sel}{Sel}
\DeclareMathOperator{\rk}{rk}
\newcommand{\A}{\mathcal{A}}
\newcommand{\Seq}{\mathcal{S}}
\newcommand{\Gm}{\mathbf{G}_{{\rm m}}}
\begin{document}

\title{On the splitting of the Kummer exact sequence}

\author{Jean Gillibert \and Pierre Gillibert}

\date{April 2018}

\maketitle

\begin{abstract}
We prove the splitting of the Kummer exact sequence and related exact sequences in arithmetic geometry.
\end{abstract}


\section{Introduction}


Exact sequences ``of Kummer type'' are ubiquitous in algebraic and arithmetic geometry. Let us start with the most famous one: if $X$ is a scheme, and if $m>0$ is an integer, the $m$-th power map on the multiplicative group $\Gm$ is surjective for the fppf topology on $X$, with kernel $\mu_m$, and induces the well-known Kummer exact sequence
\begin{equation}
\label{kummer0}
\begin{CD}
0 @>>> \Gamma(X,\mathcal{O}_X^{\times})/m @>>> H^1(X,\mu_m) @>>> \Pic(X)[m] @>>> 0. \\
\end{CD}
\end{equation}

In this paper, we prove that \eqref{kummer0} splits. The analogous statement for abelian varieties was proved in \cite[Prop.~6.7]{BKLP15}, as it was pointed to us by Bjorn Poonen. In fact, we prove a more general theorem containing both of these results as special cases. Our main result, Theorem~\ref{coh_splitting}, is purely cohomological: it applies to derived functors between abelian categories, and concerns Kummer exact sequences in all degrees.

We also discuss the counterpart of this generality. More precisely, we prove that there is no canonical splitting of \eqref{kummer0}, in the sense that one cannot choose splittings so as to commute with base change  (\S{}\ref{subsect_BC}), or Galois actions (\S{}\ref{subsect_Chris}).


\subsection{Splitting of cohomological exact sequences}

If $M$ is an abelian group (or, more generally, an object in some abelian category), and if $m>0$ is an integer, we denote by $[m]:M\to M$ the multiplication by $m$ map on $M$, by $M[m]$ its kernel, by $mM$ its image, and by $M/m$ its cokernel.

We say that $M$ is \emph{$m$-torsion} if $mM=0$. We say that $M$ is a torsion (abelian) group of \emph{bounded exponent} if $M$ is $m$-torsion for some integer $m>0$.

We now state our main result in the setting of derived functors between abelian categories.

\begin{thm}
\label{coh_splitting}
Let $\mathbf{C}$ be an abelian category with enough injectives, and let $F$ be a left exact functor from $\mathbf{C}$ to the category of abelian groups. For $i\geq 0$, we denote by $R^iF$ the $i$-th right derived functor of $F$.

Let $N$ be an object of $\mathbf{C}$, and let $m>0$ be an integer such that $[m]:N\to N$ is an epimorphism. Then for all $i>0$ the short ``Kummer'' exact sequence induced by $[m]$ on cohomology:
\begin{equation}
\label{cohomo_seq}
\begin{CD}
0 @>>> R^{i-1}F(N)/m @>>> R^iF(N[m]) @>>> R^iF(N)[m] @>>> 0 \\
\end{CD}
\end{equation}
splits.
\end{thm}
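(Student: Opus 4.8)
The plan is to produce the sequence \eqref{cohomo_seq} from the long exact sequence of derived functors, and then to split it by showing that the image of the connecting homomorphism is a \emph{pure} subgroup of bounded exponent, hence a direct summand.

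First I would record the consequence of the hypothesis. Since $[m]\colon N\to N$ is an epimorphism with kernel $N[m]$, there is a short exact sequence $0\to N[m]\xrightarrow{\iota} N \xrightarrow{[m]} N\to 0$ in $\mathbf{C}$. Applying $F$ and using that each $R^jF$ is additive (so $R^jF([m])$ is multiplication by $m$ on $R^jF(N)$), the associated long exact sequence breaks into the short exact sequences \eqref{cohomo_seq}: writing $\delta\colon R^{i-1}F(N)\to R^iF(N[m])$ for the connecting map and $\alpha=R^iF(\iota)$, one has $\ker\delta=mR^{i-1}F(N)$ and $\operatorname{im}\alpha=R^iF(N)[m]$, so $\delta$ induces the injection $R^{i-1}F(N)/m\hookrightarrow R^iF(N[m])$ and $\alpha$ the surjection onto $R^iF(N)[m]$. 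Next I would observe that all three terms are killed by $m$: the outer two visibly, and the middle one because $N[m]$ is an $m$-torsion object (as $\iota\circ[m]_{N[m]}=[m]_N\circ\iota=0$ and $\iota$ is a monomorphism, so $[m]_{N[m]}=0$), whence $R^iF(N[m])$ is $m$-torsion by additivity.

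Set $B=R^iF(N[m])$ and let $A'=\operatorname{im}\delta=\ker\alpha\subset B$, so that \eqref{cohomo_seq} reads $0\to A'\to B\to B/A'\to 0$ with $A'\cong R^{i-1}F(N)/m$ of exponent dividing $m$. The strategy now is to invoke a classical theorem on abelian groups (Pr\"ufer--Kulikov): a pure subgroup of bounded exponent is automatically a direct summand. Thus it suffices to prove that $A'$ is pure in $B$, i.e. that $A'\cap nB=nA'$ for every $n\mid m$. (For $n$ prime to $m$, multiplication by $n$ is an automorphism of both $A'$ and $B$, and the general purity condition reduces to prime-power divisors of $m$, so the divisors $n\mid m$ suffice.)

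The heart of the argument, and the step I expect to be the main obstacle, is precisely this purity: it is exactly the property that fails for a general short exact sequence of $m$-torsion groups (for instance $0\to\Z/p\to\Z/p^2\to\Z/p\to 0$), so the proof must genuinely use the cohomological origin of $\delta$ rather than abstract torsion. For $n\mid m$ I would introduce the auxiliary sequence $0\to N[n]\to N\xrightarrow{[n]} N\to 0$ (here $[n]$ is again an epimorphism, since $[n]\circ[m/n]=[m]$), with connecting map $\delta_n\colon R^{i-1}F(N)\to R^iF(N[n])$, together with the morphism of short exact sequences that is $[m/n]\colon N[m]\to N[n]$ on the kernels and the identity on the quotient $N$ (the middle arrow being $[m/n]\colon N\to N$). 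Naturality of the connecting homomorphism then gives $R^iF([m/n])\circ\delta=\delta_n$. Now take $x\in A'\cap nB$, say $x=\delta(g)=nb$; applying $R^iF([m/n])$ yields $\delta_n(g)=R^iF([m/n])(x)=n\,R^iF([m/n])(b)=0$, the final equality holding because $R^iF(N[n])$ is $n$-torsion. Since $\ker\delta_n=nR^{i-1}F(N)$ (from the long exact sequence of the auxiliary sequence), we conclude $g\in nR^{i-1}F(N)$ and hence $x=\delta(g)\in nA'$. This gives $A'\cap nB\subseteq nA'$; the reverse inclusion is trivial, so $A'$ is pure, and the Pr\"ufer--Kulikov theorem provides the desired splitting.
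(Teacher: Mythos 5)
Your proof is correct, and it follows the paper's overall strategy --- show that the Kummer sequence is pure exact and then apply a classical splitting theorem for bounded abelian groups --- but your mechanism for establishing purity is genuinely different. The paper first reduces to $m=p^n$ by the Chinese remainder theorem and then compares the Kummer sequences for $p^k$ and $p^{k+1}$ via the morphism of short exact sequences whose components are the inclusion $N[p^k]\hookrightarrow N[p^{k+1}]$, the identity on the middle $N$, and $[p]$ on the quotient $N$; this yields a ladder of short exact sequences, and a purely group-theoretic ``Main Lemma'' then proves purity of the top sequence in the element-order sense: an element $t\in R^iF(N)[p^n]$ of order $p^k$ is lifted at level $k$, where the whole group $R^iF(N[p^k])$ is $p^k$-torsion, and pushed up to a preimage in $R^iF(N[p^n])$ of the same order $p^k$. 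You instead compare the sequence for $m$ with the one for each divisor $n\mid m$ by the morphism going \emph{downward} ($[m/n]$ on $N[m]$ and on the middle $N$, identity on the quotient $N$), and use naturality of the connecting map, $R^iF([m/n])\circ\delta=\delta_n$, to verify the subgroup criterion $A'\cap nB=nA'$ directly; this bypasses both the CRT reduction and the intermediate group-theoretic lemma. The final classical inputs also differ: the paper uses Pr\"ufer's first theorem together with the fact that a pure exact sequence whose quotient is a direct sum of cyclic groups splits (\cite[Chap.~V]{fuchs70}), while you invoke Kulikov's theorem that a bounded pure subgroup is a direct summand; for $m$-torsion groups these are interchangeable. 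What the paper's factorization buys is reusability: its Main Lemma is applied again verbatim for the dual statement and in the direct-limit discussion, whereas your argument is more self-contained and shorter for this single theorem. One point you should state more carefully is the reduction of purity to divisors $n\mid m$: it is valid precisely because all three groups are killed by $m$, so that $nB=\gcd(n,m)\,B$ and $nA'=\gcd(n,m)\,A'$ for every integer $n$, but your parenthetical remark only gestures at this.
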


Our proof proceeds as follows: according to the Chinese remainder theorem, we may assume that $m=p^n$ for some prime $p$. By considering the natural diagram relating the Kummer sequence for $p^k$ with the one for $p^{k+1}$ (see the proof of Theorem~\ref{coh_splitting}), we prove (in our ``Main Lemma'', Lemma~\ref{MainLemma}) that the Kummer exact sequence is \emph{pure exact} (cf. Definition~\ref{pure_exact_dfn}). The splitting of the Kummer exact sequence ultimately relies on the first Pr{\"u}fer theorem, which states that any torsion abelian group of bounded exponent is a direct sum of cyclic groups.

We shall now describe the consequences and limitations of Theorem~\ref{coh_splitting}.


\subsection{Kummer theory on semi-abelian schemes}

In order to consider at the same time Kummer theory on tori and on abelian varieties, we state our results in the setting of semi-abelian schemes.


Taking $N$ in Theorem~\ref{coh_splitting} to be the fppf sheaf associated to a semi-abelian scheme (see \cite[Definition~2.3]{FC}) yelds the following.

\begin{cor}
\label{prop1}
Let $\A$ be a semi-abelian scheme over a scheme $X$. Then for all $m>0$ and for all $i>0$, the exact sequence
\begin{equation}
\label{kummer_i}
\begin{CD}
0 @>>> H^{i-1}(X,\A)/m @>>> H^i(X,\A[m]) @>>> H^i(X,\A)[m] @>>> 0 \\
\end{CD}
\end{equation}
splits.
\end{cor}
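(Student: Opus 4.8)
The plan is to deduce Corollary~\ref{prop1} directly from Theorem~\ref{coh_splitting} by specializing the abstract framework to the setting of fppf cohomology on the scheme $X$. First I would take $\mathbf{C}$ to be the category of abelian fppf sheaves on $X$, which is an abelian category with enough injectives, and let $F$ be the global sections functor $\Gamma(X,-)$, which is left exact. Its right derived functors $R^iF$ are by definition the fppf cohomology groups $H^i(X,-)$.

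The key input is the choice of object $N$. Following the hint in the excerpt, I would let $N$ be the fppf sheaf associated to the semi-abelian scheme $\A$ (as in \cite[Definition~2.3]{FC}). The crucial point to verify is that the hypothesis of Theorem~\ref{coh_splitting} holds, namely that $[m]\colon N\to N$ is an epimorphism in the category of fppf sheaves for every integer $m>0$. This is precisely the statement that multiplication by $m$ is surjective as a map of fppf sheaves, which holds for semi-abelian schemes: both tori and abelian schemes have this property, since $[m]$ is a finite flat (indeed fppf) surjective morphism, and for an extension
\begin{equation*}
0 \longrightarrow T \longrightarrow \A \longrightarrow B \longrightarrow 0
\end{equation*}
of an abelian scheme $B$ by a torus $T$, surjectivity of $[m]$ on $T$ and on $B$ gives surjectivity on $\A$ by the snake lemma in the category of fppf sheaves. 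Its kernel is by definition the finite flat group scheme $\A[m]$ viewed as an fppf sheaf.

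Once these identifications are in place, Theorem~\ref{coh_splitting} applies verbatim: the exact sequence~\eqref{cohomo_seq} becomes, upon substituting $R^iF=H^i(X,-)$, $N=\A$, and $N[m]=\A[m]$, exactly the sequence~\eqref{kummer_i}, and the theorem asserts that it splits for all $i>0$. I expect the main (indeed only) obstacle to be the verification that $[m]$ is an fppf epimorphism on $\A$; everything else is a translation of notation. This surjectivity is standard, but it is worth recording carefully since it is the sole hypothesis of the abstract theorem that has genuine geometric content, and it is what restricts the result to semi-abelian schemes rather than arbitrary group schemes.
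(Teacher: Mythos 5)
Your proposal follows exactly the paper's route: the paper deduces the corollary from Theorem~\ref{coh_splitting} in a single sentence, taking $\mathbf{C}$ to be the category of abelian fppf sheaves on $X$, $F=\Gamma(X,-)$, and $N$ the fppf sheaf attached to $\A$, so that the only content is the fact that $[m]:\A\to\A$ is an fppf epimorphism. One caveat about your verification of that hypothesis: a semi-abelian scheme over a general base $X$ in the sense of \cite[Definition~2.3]{FC} is a smooth separated commutative group scheme whose \emph{fibres} are extensions of abelian varieties by tori; there need not exist a global short exact sequence $0\to T\to \A\to B\to 0$ of group schemes over $X$ (the toric rank can jump from fibre to fibre, as for the connected N\'eron model of a degenerating elliptic curve), so your snake-lemma argument cannot be run globally as written. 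The repair is standard and fibrewise: $[m]$ is surjective on each fibre (there your extension argument is valid), flat by the fibral criterion for flatness (since $\A$ is $X$-flat and $[m]$ is flat on fibres), and locally of finite presentation; hence $[m]$ is an fppf covering, and therefore an epimorphism of fppf sheaves. Relatedly, $\A[m]$ is in general only quasi-finite, flat and separated over $X$, not finite, but this does not matter for the proof, since all that is used is the identification of the kernel of $[m]$ as an fppf sheaf.
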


\begin{rmq}
\label{rmq1}
\begin{enumerate}
\item If $\A$ is the multiplicative group $\Gm$, then for $i=1$ we recover the exact sequence \eqref{kummer0} from the beginning. The result also applies to abelian schemes over $X$, and in particular to abelian varieties over any field.
\item Our original motivation for the splitting of the Kummer exact sequence arised in a joint paper by Aaron Levin and the first author \cite{GL12}. The setting was the following: $K$ is a number field, $\mathcal{O}_K$ is its ring of integers, and $m$ is fixed. The Kummer exact sequence for $\Gm$ over $\Spec(\mathcal{O}_K)$ reads
\begin{equation}
\label{OriginalKummer_seq}
\begin{CD}
0 @>>> \mathcal{O}_K^{\times}/m @>>> H^1(\Spec(\mathcal{O}_K),\mu_m) @>>> \Cl(K)[m] @>>> 0 \\
\end{CD}
\end{equation}
where $\Cl(K)$ denotes the ideal class group of $K$. Using elementary considerations on torsion abelian groups, it was proved in \cite{GL12} that
$$
\rk_m H^1(\Spec(\mathcal{O}_K),\mu_m)\geq \rk_m \Cl(K)[m]+\rk_m (\mathcal{O}_K^{\times}/m)
$$
where $\rk_m M$ denotes the largest integer $r$ such that $M$ has a subgroup isomorphic to $(\Z/m)^r$. Now that the splitting of the sequence \eqref{OriginalKummer_seq} is proved, the inequality above turns out to be an equality.
\item Let $X$ be a connected Dedekind scheme, and let $A$ be an abelian variety over its function field. If $A$ has semi-stable reduction over $X$, then the connected component of the N{\'e}ron model of $A$ is a semi-abelian scheme over $X$, so Corollary~\ref{prop1} applies to it.
\item Let $A$ be an abelian variety over a number field $K$. Fix a positive integer $m$. Define $\Sel^m(A/K)$ and $\Sha(A/K)$ as in \cite[X.4]{silverman}. Pulling back the split exact sequence \eqref{kummer_i} for $A/K$ and $i=1$ by the inclusion $\Sha(A/K)[m]\hookrightarrow H^1(K,A)[m]$ yelds a split exact sequence
\begin{equation}
\label{mdescent}
\begin{CD}
0 @>>> A(K)/m @>>> \Sel^m(A/K) @>>> \Sha(A/K)[m] @>>> 0. \\
\end{CD}
\end{equation}
which is none other than the ``$m$-descent exact sequence''.
\end{enumerate}
\end{rmq}


\subsection{A remark on base change}
\label{subsect_BC}

The splitting of the sequence \eqref{kummer0} is in general not compatible with base change, as shows the following example. Let $k$ be an algebraically closed field, and let $m>0$ coprime to the characteristic of $k$. Let $C$ be a smooth, connected, projective $k$-curve of genus $g\geq 1$. Then $\Pic(C)[m]\simeq (\Z/m)^{2g}$ and $\Gamma(C,\mathcal{O}_C^{\times})/m=k^{\times}/m=0$. When trying to compare the Kummer exact sequence \eqref{kummer0} over $C$ with the same sequence over $\Spec(k(C))$, one gets the following commutative diagram
$$
\begin{CD}
@. 0 @>>> H^1(C,\mu_m) @>\sim>> \Pic(C)[m] @>>> 0 \\
@. @VVV @VVV @VVV \\
0 @>>> k(C)^{\times}/m @>\sim>> H^1(\Spec(k(C)),\mu_m) @>>> 0 \\
\end{CD}
$$
in which the vertical maps are obtained by restriction to the generic point of $C$. The category of smooth, connected, projective $k$-curves being equivalent to that of function fields (of one variable) over $k$, the vertical map in the middle is injective. Therefore, the splitting of the two sequences does not commute with the vertical base change map.


\subsection{A remark on Galois actions}
\label{subsect_Chris}

Let us consider a finite Galois extension $K/k$ of fields of characteristic $0$, and let $E$ be an elliptic curve defined over $k$. Then the group $G:=\Gal(K/k)$ acts on $E(K)$ and $H^1(K,E)$. Consequently, if $p$ is a prime number, the Kummer exact sequence
\begin{equation}
\label{chris_seq}
\begin{CD}
0 @>>> E(K)/p @>>> H^1(K,E[p]) @>>> H^1(K,E)[p] @>>> 0 \\
\end{CD}
\end{equation}
is an exact sequence of $\mathbb{F}_p[G]$-modules. It is a natural question to ask if this exact sequence splits in the category of $\mathbb{F}_p[G]$-modules, i.e. if it admits a $G$-equivariant section.

The following was communicated to us by Christian Wuthrich.

\begin{prop}
Let $p\neq 2$ be a prime number, and let $K/\Q_p$ be the unique unramified Galois extension of $\Q_p$ whose Galois group $G$ is cyclic of order $p$. Then there exists an elliptic curve $E$ defined over $\Q_p$ such that \eqref{chris_seq} does \emph{not} split as a sequence of $\mathbb{F}_p[G]$-modules.
\end{prop}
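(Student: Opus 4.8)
**The plan is to exhibit an explicit elliptic curve and compute the three terms of the Kummer sequence as $\mathbb{F}_p[G]$-modules.** Since $G$ is cyclic of order $p$, the group algebra $\mathbb{F}_p[G]$ is a local ring, isomorphic to $\mathbb{F}_p[t]/(t^p)$ via a generator $\sigma \mapsto 1+t$. Its indecomposable modules are completely classified: they are exactly the Jordan blocks $J_r := \mathbb{F}_p[t]/(t^r)$ for $1 \le r \le p$, of dimension $r$ over $\mathbb{F}_p$. A short exact sequence $0 \to A \to B \to C \to 0$ of such modules splits if and only if $B \cong A \oplus C$, so the whole problem reduces to computing the Jordan types (equivalently, the $\mathbb{F}_p$-dimensions together with the ranks of $(\sigma-1)^j$) of the three terms and showing they are incompatible with a direct-sum decomposition.

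So let me think about this more carefully. The key structural fact I expect to use is that $K/\mathbb{Q}_p$ is unramified cyclic of degree $p$, so $\mathrm{Gal}(K/\mathbb{Q}_p)$ is generated by the Frobenius, and the local arithmetic of $E$ over $K$ is controlled by the reduction type. The plan is to choose $E$ to have good reduction over $\mathbb{Q}_p$ (hence over $K$). Then I can get a handle on $E(K)/p$ via the formal group and the reduction exact sequence $0 \to \hat{E}(\mathfrak{m}_K) \to E(K) \to \tilde{E}(\mathbb{F}_{p^p}) \to 0$; the prime-to-$p$ part of $E(K)$ contributes nothing mod $p$, while the formal group part is a free $\mathbb{Z}_p$-module of rank $[K:\mathbb{Q}_p]=p$ that I can analyze as a Galois module using the normal basis theorem for the unramified extension. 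I expect $E(K)/p$ to be close to the regular representation $\mathbb{F}_p[G]$ (a single Jordan block $J_p$) up to the contribution of $E[p]$-rational points, which I can arrange by picking the reduction so that $\tilde{E}(\mathbb{F}_p)[p]$ is controlled.

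For the middle and right-hand terms I would compute $H^1(K,E[p])$ and $H^1(K,E)[p]$ using local duality and local Euler characteristics. The local Euler characteristic formula gives $\dim_{\mathbb{F}_p} H^0 - \dim H^1 + \dim H^2 = -[K:\mathbb{Q}_p]\,\dim E[p] = -2p$ for the $G_K$-cohomology of $E[p]$, and Tate local duality identifies $H^1(K,E)[p]$ with the dual of $H^1(K,E[p])/(\text{image of }E(K)/p)$, i.e. with $H^0(K,E[p])^\vee = E(K)[p]^\vee$ via the Weil pairing. By choosing $E$ so that $E(K)[p]$ is a prescribed $\mathbb{F}_p[G]$-module — for instance forcing $E[p]$ to be irrational but with a one-dimensional fixed space — I can pin down the Jordan type of $H^1(K,E)[p]$ exactly. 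The strategy is then to show that the Jordan block structure of the middle term $H^1(K,E[p])$ does \emph{not} decompose as the direct sum of the computed sub- and quotient-modules.

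\textbf{The main obstacle} will be making the Galois-module computation of $E(K)/p$ precise rather than just computing dimensions: I need the actual Jordan type, which requires understanding how Frobenius acts on the formal-group points and on the reduction, and in particular controlling the interaction between the $\sigma-1$ operator and the norm map $N = 1+\sigma+\cdots+\sigma^{p-1}$ on each term. The cleanest route is probably to choose $E$ with a rational $p$-torsion point over $\mathbb{Q}_p$ — or conversely with no $p$-torsion — so that the extremal terms become as simple as possible ($J_1$ or $J_p$), and then to detect non-splitting by a single numerical invariant: the dimension of the $G$-fixed subspace $H^1(K,E[p])^G$. If splitting held, this fixed-space dimension would be additive across the sequence after accounting for the right-exactness of $(-)^G = H^0(G,-)$, and a discrepancy — visible through the connecting map in the Tate cohomology of $G$ acting on the sequence, or equivalently through $\hat{H}^0$ versus $\hat{H}^{-1}$ — pins down the failure. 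Concretely, I would compute that $\dim H^1(K,E[p])^G$ exceeds $\dim (E(K)/p)^G + \dim (H^1(K,E)[p])^G$ is impossible for a split sequence, so exhibiting the strict relation forces non-splitting; verifying this inequality for the specific curve is where the real work lies, and I expect to lean on Wuthrich's explicit example together with a direct Tate-cohomology calculation to close it.
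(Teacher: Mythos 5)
There is a genuine gap, and it is structural rather than a matter of unfinished bookkeeping. You commit to choosing $E$ with \emph{good reduction} over $\Q_p$, and you single out as the ``cleanest route'' the case where $E$ has no $p$-torsion. But on that class of curves the statement you are trying to disprove is true: the sequence \eqref{chris_seq} \emph{splits}, so no computation of Jordan types can produce a counterexample there. Indeed, since $K/\Q_p$ is unramified and $p$ is odd: (i) Lang's theorem (together with the triviality of the Herbrand quotient of a finite module) makes $\tilde E(\mathbb{F}_{p^p})$ a cohomologically trivial $G$-module; (ii) the formal logarithm is a $G$-equivariant isomorphism $\hat E(\mathfrak{m}_K)\simeq \mathfrak{m}_K\simeq\mathcal{O}_K$, and $\mathcal{O}_K$ is free of rank one over $\Z_p[G]$ because unramified extensions admit normal integral bases; hence $E(K)$ is cohomologically trivial. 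If moreover $E(K)[p]=0$, the Tate cohomology sequence of $0\to E(K)\xrightarrow{[p]} E(K)\to E(K)/p\to 0$ shows that $E(K)/p$ is a cohomologically trivial $\mathbb{F}_p[G]$-module, hence free (in your Jordan-block language: $\hat H^0(G,J_r)\neq 0$ whenever $r<p$, so only blocks $J_p$ can occur), hence injective, since $\mathbb{F}_p[G]$ is a Frobenius algebra. A short exact sequence whose left-hand term is injective splits. This is precisely why the paper's proof (due to Wuthrich) takes $E$ to be a \emph{Tate curve} with parameter $q=p^pu$, where $u\in\Z_p^\times$ is not a $p$-th power: one still has $E(K)[p]=0$ (so that inflation-restriction identifies $H^1(\Q_p,E[p])$ with $H^1(K,E[p])^G$), but $E(K)\simeq K^\times/q^\Z$ is \emph{not} cohomologically trivial --- the valuation sequence and the $G$-acyclicity of $\mathcal{O}_K^\times$ give $H^1(G,E(K)/p)\simeq(\Z/p)^2\neq 0$ --- and via Tate duality this nonvanishing is exactly what prevents $H^1(K,E[p])^G\to H^1(K,E)[p]^G$ from being surjective. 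Your remaining option (good reduction with a rational $p$-torsion point) is left entirely uncomputed, and your stated plan to ``lean on Wuthrich's explicit example'' is circular: that example \emph{is} the proof you were asked to supply.

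What is sound in your proposal is the non-splitting criterion itself: a split sequence remains (split) exact after applying the additive functor $(-)^G$, so it suffices to show that the induced map on $G$-invariants fails to be surjective; this, combined with the Tate-duality translation into a statement about the norm map $(E(K)/p)_G\to E(\Q_p)/p$, is exactly the paper's argument. Two slips, however. First, since $(-)^G$ is left exact, one always has $\dim B^G\le\dim A^G+\dim C^G$ for a sequence $0\to A\to B\to C\to 0$; non-splitting is therefore witnessed by a \emph{strict inequality} $\dim B^G<\dim A^G+\dim C^G$, never by $\dim B^G$ ``exceeding'' the sum as you write. Second, local duality identifies $H^1(K,E)[p]$ with $(E(K)/p)^\vee$ (the Kummer image being its own exact annihilator), not with $E(K)[p]^\vee$: these groups have $\mathbb{F}_p$-dimensions $p+\dim E(K)[p]$ and $\dim E(K)[p]$ respectively, so this error would corrupt every subsequent dimension count.
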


\begin{proof}
Because  $p\neq 2$ and $K/\Q_p$ is unramified, we have that $\mu_p(K)=\mu_p(\Q_p)=\{1\}$. The multiplicative group $\Z_p^{\times}$ can be described as
$$
\Z_p^{\times}\simeq \mathbb{F}_p^{\times}\times (1+p\Z_p)\simeq \Z/(p-1)\times \Z_p.
$$
Let us choose some $u\in\Z_p^{\times}$ which is not a $p$-th power. We claim that $u$ is not a $p$-th power in $K^{\times}$ either. Indeed, if it were the case, then one could write $u=\alpha^p$ for some $\alpha\in K$, and by Galois theory all conjugates of $\alpha$ would belong to $K$, hence $p$-th roots of unity would belong to $K$, contradiction.

Let $q:=p^pu$, and let $E/\Q_p$ be the Tate elliptic curve with parameter $q$. Then $E(K)\simeq K^{\times}/q^{\Z}$, hence $E(K)[p]=0$ because $\mu_p(K)=\{1\}$ and $q$ is not a $p$-th power in $K^{\times}$.

By applying $G$-invariants to the sequence \eqref{chris_seq}, we obtain the bottom exact sequence in the commutative diagram below
\begin{equation}
\label{chris_diagram}
\begin{CD}
0 @>>> E(\Q_p)/p @>>> H^1(\Q_p,E[p]) @>>> H^1(\Q_p,E)[p] @>>> 0 \\
@. @VVV @V\beta VV @V\gamma VV \\
0 @>>> (E(K)/p)^G @>>> H^1(K,E[p])^G @>\rho>> H^1(K,E)[p]^G \\
\end{CD}
\end{equation}
where $\rho$ denotes the restriction of the natural map $H^1(K,E[p])\to H^1(K,E)[p]$ to $G$-invariants subgroups on both sides.
As noted above, $E(K)[p]=0$, hence $\beta$ is an isomorphism, according to the inflation-restriction exact sequence.
 
If \eqref{chris_seq} were split as a sequence of $\mathbb{F}_p[G]$-modules, then the map $\rho$ would be surjective. On the other hand, if $\rho$ were surjective then according to the diagram \eqref{chris_diagram} above, $\gamma$ would also be surjective. We shall now prove that this is not the case.

According to local Tate duality \cite{tate}, the Pontryagin dual of $\gamma$ can be identified with the ``norm'' map
$$
(E(K)/p)_G \longrightarrow E(\Q_p)/p
$$
where $(E(K)/p)_G$ denotes $G$-coinvariants. The kernel of this map is (by definition) Tate's cohomology group $\hat{H}^{-1}(G,E(K)/p)$. The group $G$ being cyclic, this group is isomorphic to $H^1(G,E(K)/p)$. So in order to prove that $\gamma$ is not surjective it suffices to prove that $H^1(G,E(K)/p)$ is not zero.

Let $\mathcal{O}_K$ be the ring of integers of $K$. Because $q$ has $p$-adic valuation $p$, we have an exact sequence
\begin{equation}
\label{infres}
\begin{CD}
0 @>>> \mathcal{O}_K^{\times} @>>> K^{\times}/q^{\Z} @>>> \Z/p @>>> 0 \\
\end{CD}
\end{equation}
where the map on the right is induced by the $p$-adic valuation. Note that $G$ acts trivially on the quotient group. According to \cite[Proposition~7.1.2]{neukirch}, the extension $K/\Q_p$ being unramified, $\mathcal{O}_K^{\times}$ is $G$-acyclic. Therefore, we have isomorphisms $H^i(G,E(K))\simeq H^i(G,K^{\times}/q^{\Z})\simeq H^i(G,\Z/p)$ for all $i>0$. In particular, $H^1(G,E(K))$ and $H^2(G,E(K))$ are both cyclic of order $p$.

Because $E(K)$ has trivial $p$-torsion, we have an exact sequence
$$
\begin{CD}
0 @>>> E(K) @>[p]>> E(K) @>>> E(K)/p @>>> 0. \\
\end{CD}
$$
The group $G$ being cyclic of order $p$, the groups $H^i(G,E(K))$ are $p$-torsion for $i>0$, hence the exact sequence above induces on cohomology a short exact sequence
$$
\begin{CD}
0 @>>> H^1(G,E(K)) @>>> H^1(G,E(K)/p) @>>> H^2(G,E(K)) @>>> 0. \\
\end{CD}
$$
This proves that $H^1(G,E(K)/p)$ is isomorphic to $(\Z/p)^2$, hence the result.
\end{proof}

%


\section{Proofs}


\subsection{The Main Lemma}

The aim of this section is to prove the following:

\begin{lem}[Main Lemma]
\label{MainLemma}
Let $p$ be a prime number, let $n>0$ be an integer, and let $C$ be an abelian group.

Assume that we have, for each $0<k\leq n$, an exact sequence $\Seq_k$ of $\Z/p^k$-modules
\begin{equation}
\label{Sk}
\tag{$\Seq_k$}
\begin{CD}
0 @>>> A_k @>>> B_k @>>> C[p^k] @>>> 0 \\
\end{CD}
\end{equation}
together with a sequence of maps between these sequences
\begin{equation}
\label{S2}
\begin{CD}
0 @>>> A_k @>>> B_k @>>> C[p^k] @>>> 0 \\
@. @VVV @VVV @VVV \\
0 @>>> A_{k+1} @>>> B_{k+1} @>>> C[p^{k+1}] @>>> 0 \\
\end{CD}
\end{equation}
where the vertical map on the right is the canonical inclusion $C[p^k]\hookrightarrow C[p^{k+1}]$.

Then the exact sequence $\Seq_n$ splits.
\end{lem}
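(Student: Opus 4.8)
The plan is to construct an explicit splitting of the surjection $\pi_n\colon B_n\to C[p^n]$ by lifting generators at the appropriate level of the tower. First, since $C[p^n]$ is killed by $p^n$, it is a torsion abelian group of bounded exponent, so by the first Pr\"ufer theorem it decomposes as a direct sum of cyclic groups, $C[p^n]=\bigoplus_{i\in I}\langle c_i\rangle$, where each $c_i$ has order exactly $p^{e_i}$ for some $1\le e_i\le n$. Because $C[p^n]$ is a coproduct of cyclic groups, a homomorphism out of it is the same as a family of homomorphisms out of the summands; hence to produce a section $s\colon C[p^n]\to B_n$ of $\pi_n$ it suffices to specify, for each $i$, an element $b_i\in B_n$ with $\pi_n(b_i)=c_i$ and $p^{e_i}b_i=0$. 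The first condition makes $s$ a section, and the second guarantees that setting $s(c_i)=b_i$ extends to a well-defined homomorphism on $\langle c_i\rangle\simeq\Z/p^{e_i}$.

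The construction of $b_i$ is where the tower of sequences enters. Fix $i$ and write $e=e_i$. Since $c_i$ has order dividing $p^e$, we have $c_i\in C[p^e]$. The sequence $\Seq_e$ provides a surjection $B_e\to C[p^e]$, so we may choose a preimage $b_i'\in B_e$ of $c_i$. The crucial point is that $B_e$ is a $\Z/p^e$-module, so $p^e b_i'=0$ automatically. Now push $b_i'$ up the tower: let $\psi\colon B_e\to B_n$ be the composite of the vertical maps $B_e\to B_{e+1}\to\cdots\to B_n$ supplied by the diagrams \eqref{S2}, and set $b_i:=\psi(b_i')$. Composing the right-hand squares of \eqref{S2}, and using that the right vertical maps are the canonical inclusions $C[p^k]\hookrightarrow C[p^{k+1}]$, one gets $\pi_n\circ\psi=\iota\circ\pi_e$ where $\iota\colon C[p^e]\hookrightarrow C[p^n]$ is the inclusion; hence $\pi_n(b_i)=\iota(c_i)=c_i$. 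Moreover $\psi$ is a group homomorphism, so $p^e b_i=\psi(p^e b_i')=\psi(0)=0$. Thus $b_i$ has the two required properties, and assembling the $b_i$ over all $i\in I$ yields the section $s$, so $\Seq_n$ splits.

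The main conceptual step, and the reason the tower hypothesis is present, is the observation that one should lift each generator at the level $e_i$ equal to its order rather than at the top level $n$: lifting at level $e_i$ forces the lift to be annihilated by $p^{e_i}$ \emph{for free}, since $B_{e_i}$ is a $\Z/p^{e_i}$-module, whereas an arbitrary lift in $B_n$ of $c_i$ would have no reason to have order dividing $p^{e_i}$. I expect everything else to be routine diagram-chasing; in particular the argument never uses the specific form of the left-hand vertical maps $A_k\to A_{k+1}$, only that the right-hand maps are the canonical inclusions and that each $B_k$ is a $\Z/p^k$-module. (The same construction in fact shows that every cyclic subgroup of $C[p^n]$ lifts compatibly, which is the pure-exactness statement alluded to in the introduction, but the direct splitting above is all that is needed here.)
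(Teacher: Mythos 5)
Your proof is correct and follows essentially the same route as the paper: the paper likewise invokes the first Pr{\"u}fer theorem to write $C[p^n]$ as a direct sum of cyclic groups, and its key step is exactly your lifting trick --- an element of order $p^{e}$ is lifted through $\Seq_{e}$, where the lift is automatically killed by $p^{e}$ since $B_e$ is a $\Z/p^e$-module, and then pushed up the tower to $B_n$. The only difference is organizational: the paper packages this as the statement that $\Seq_n$ is \emph{pure exact} and then applies Lemma~\ref{pureSplit} (pure exact sequences onto direct sums of cyclic groups split), whereas you inline that argument into a direct construction of the section, as you yourself note at the end.
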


Before we prove this result, we briefly recall the definition of pure exact sequences in the category of abelian groups. For a detailed exposition, we refer to \cite[Chap.~V]{fuchs70}.

\begin{dfn}
\label{pure_exact_dfn}
A short exact sequence of abelian groups
\begin{equation}
\label{pure_seq1}
\begin{CD}
0 @>>> A @>>> B @>>> C @>>> 0 \\
\end{CD}
\end{equation}
is a \emph{pure exact sequence} if, for all $c\in C$, there exists $b\in B$ such that $b\mapsto c$ and $b$ has the same order than $c$.
\end{dfn}

\begin{rmq}
\begin{enumerate}
\item Equivalently, the sequence \eqref{pure_seq1} is pure exact if and only if, for every integer $n\geq 0$, $nA=A\cap (nB)$;
\item Split exact sequences are pure, but the converse is false (see \S{}\ref{exemplePierre} below).
Nevertheless, any pure exact sequence is a direct limit of a system of split exact sequences, see \cite[Chap.~V,~\S{}29]{fuchs70}.
\end{enumerate}
\end{rmq}

\begin{lem}
\label{pureSplit}
Assume that $C$ is a direct sum of cyclic groups. Then any pure exact sequence of the form \eqref{pure_seq1} splits.
\end{lem}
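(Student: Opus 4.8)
The plan is to reduce the general case to the case where $C$ is a single cyclic group, and then handle that case by hand using the purity hypothesis. The key observation is that a direct sum decomposition $C=\bigoplus_{j} C_j$ into cyclic groups induces a decomposition of the entire exact sequence: setting $A=B_0$ and pulling back the sequence \eqref{pure_seq1} along each inclusion $C_j\hookrightarrow C$, I would obtain, for each $j$, a short exact sequence
\begin{equation*}
\begin{CD}
0 @>>> A @>>> B_j @>>> C_j @>>> 0 \\
\end{CD}
\end{equation*}
where $B_j$ is the preimage of $C_j$ in $B$. I would first check that purity is inherited by these pullbacks, and that a compatible family of splittings $s_j:C_j\to B_j$ assembles into a single splitting $s:C\to B$ of the original sequence. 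This assembly step is essentially formal (one uses the universal property of the direct sum), so the real content is concentrated in the cyclic case.

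So the main step is to prove the lemma when $C$ is cyclic, say $C=\langle c\rangle$. If $C$ is infinite cyclic the sequence splits automatically, since $C\cong\Z$ is free (projective), so I may assume $C\cong\Z/d$ for some $d>0$, with $c$ a generator of order exactly $d$. Here is precisely where purity does the work: by Definition~\ref{pure_exact_dfn}, because $c$ has order $d$, there exists an element $b\in B$ mapping to $c$ such that $b$ also has order exactly $d$. I would then define the splitting by sending the generator $c$ to this chosen lift $b$; since $b$ has order $d$, the assignment $1\mapsto b$ respects the relation $d\cdot c=0$ and hence extends to a well-defined homomorphism $s:\Z/d\cong C\to B$. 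By construction $s$ is a section of the surjection $B\to C$, so the sequence splits.

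The main (indeed only) obstacle is the inheritance of purity under pullback along $C_j\hookrightarrow C$, which must be verified carefully so that the cyclic case actually applies to each summand. Concretely, given $c_j\in C_j$, purity of the original sequence furnishes a lift $b\in B$ of the same order as $c_j$; since $b$ maps into $C_j\subseteq C$, it already lies in the preimage $B_j$, and its order is computed in $B$ (hence is unchanged in the subgroup $B_j$), so the pulled-back sequence is again pure. Once this is in place, the cyclic case together with the assembly of a compatible section completes the proof. I expect no serious difficulty beyond bookkeeping: the essential input is the first Prüfer theorem (that $C$ is a direct sum of cyclics, which is our hypothesis here) combined with the order-preserving lift guaranteed by purity.
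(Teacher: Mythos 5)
Your proof is correct and follows essentially the same route as the paper's: reduce to the cyclic case by restricting the sequence to the preimages $g^{-1}(C_j)$, use purity to lift a generator to an element of the same order (giving a section on each cyclic summand), and assemble the partial sections into a section $\oplus_j s_j\colon C\to B$ via the universal property of the direct sum. Your two added checks --- the infinite cyclic case and the inheritance of purity by the pulled-back sequences --- are details the paper leaves implicit, and both are verified correctly.
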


\begin{proof}
This is proved in \cite[Chap.~V,~\S{}28]{fuchs70}, but for the reader's convenience we sketch a proof: assume $C$ is cyclic and the sequence \eqref{pure_seq1} is pure exact. Pick a generator $c$ of $C$. By purity, there exists $b\in B$ which maps to $c$ and has the same order than $C$. It follows that the map $C\to B;c^n\mapsto b^n$ is a group-theoretic section of the map $g:B\to C$. The general case follows: if $C=\oplus_{i\in I} C_i$ is direct sum of cyclic groups, then there exist partial sections $s_i:C_i\to g^{-1}(C_i)$ which give rise to a section $\oplus s_i:C\to B$, hence the result.
\end{proof}

\begin{proof}[Proof of Lemma~2.1]
For each $1\le k\le n$, let us denote by $g_k: B_k\to C[p^k]$ the surjective morphism in $\Seq_k$.

According to the first Pr{\"u}fer theorem (see \cite[Chap.~III,~\S{}17]{fuchs70}), any torsion abelian group of bounded exponent is a direct sum of cyclic groups. It follows that $C[p^n]$ is a direct sum of cyclic groups, hence, according to Lemma~\ref{pureSplit}, in order to prove that $\Seq_n$ splits it suffices to show that $\Seq_n$ is pure exact.

Let $t\in C[p^n]\setminus \{0\}$, then the order of $t$ is $p^k$ for some $k>0$, thus $t\in C[p^k]$. By surjectivity of $g_k$, there exists $x\in B_k$ such that $g_k(x)=t$, and the order of $x$ is $p^k$, because $B_k$ is a $p^k$-torsion group. Let $y$ be the image of $x$ in $B_n$. Then $y$ has order dividing $p^k$, and $g_n(y)=t$ has order $p^k$, hence it follows that $y$ has order $p^k$. This shows that $\Seq_n$ is pure exact.
\end{proof}


\subsection{Direct limits}
\label{exemplePierre}

One may ask if, in the situation of Lemma~\ref{MainLemma}, the exact sequence of direct limits
\begin{equation}
\label{DLseq}
\begin{CD}
0 @>>> \varinjlim A_k @>>> \varinjlim B_k @>>> C[p^{\infty}] @>>> 0 \\
\end{CD}
\end{equation}
does split. The answer is negative, as the following example shows.

Given an integer $n\ge 1$, set $B_n:=\prod_{k=1}^n\Z/p^k$, set $C_n:=\Z/p^n$, and consider $g_n: B_n\to C_n$ defined by $g_n((x_k+p^k\Z)_{1\le k\le n}) = \sum_{k=1}^n p^{n-k}x_k + p^n\Z$. Note that $g_n$ is surjective. Set $A_n=\ker g_n$, and denote $f_n: A_n\to B_n$ the inclusion. Thus $0\to A_n\to B_n\to C_n\to 0$ is an exact sequence.

Set $\psi_n: B_n\to B_{n+1}$, defined by $\psi_n(\vec x) = (\vec x,0)$, set $\eta_n: C_n\to C_{n+1}$, defined by $\eta_n(x+p^{n}\Z) = px+p^{n+1}\Z$. Note that $\eta_n\circ g_n =g_{n+1}\circ \psi_n$. Let $x\in A_n=\ker g_n$, it follows that $g_{n+1}(\psi_n(x)) = \eta_n(g_n(x))=\eta_n(0)=0$, therefore $\psi_n(x)\in A_{n+1}=\ker g_{n+1}$. Denote by $\varphi_n: A_n\to A_{n+1}$ the restriction of the map $\psi_n$. The following diagram is commutative

\[
\begin{CD}
0 @>>> A_n @>f_n>> B_n @>g_n>> C_n @>>> 0 \\
@. @V\varphi_nVV @V\psi_nVV @V\eta_nVV \\
0 @>>> A_{n+1} @>f_{n+1}>> B_{n+1} @>g_{n+1}>> C_{n+1} @>>> 0. \\
\end{CD}
\]

Note that for each $n$, the morphism $g_n$ has a section. However, if one restricts a section of $g_{n+1}$ to the subgroup $C_n$ it cannot factor through the map $\psi_n$, in other words there is no compatible family of sections.

Let us denote by
\begin{equation}
\label{lim_seq}
\begin{CD}
0 @>>> A_\infty @>>> B_\infty @>>> C_\infty @>>> 0 \\
\end{CD}
\end{equation}
the direct limit of the family of exact sequences above. For each $n\ge 1$, we have a natural identification of $C_\infty[p^n]$ with $C_n$. In $C_\infty$ there exist nonzero elements which are infinitely divisible by $p$ (indeed, all elements are divisible by $p$). However, there is no element in $B_\infty$ which is infinitely divisible by $p$. Therefore there is no section $s: C_\infty\to B_\infty$. We note that the limit sequence \eqref{lim_seq} is pure exact.

Nevertheless one may prove, under some additional hypothesis, that the exact sequence of direct limits splits.

\begin{prop}
\label{DirectLimit}
Let us consider an infinite family $(\Seq_k)_{k>0}$ of short exact sequences and maps between these sequences, as in Lemma~\ref{MainLemma}. Assume in addition that one of the following conditions holds:
\begin{enumerate}
\item[1)] the direct limit of the $A_k$ admits a decomposition of the form
$$
\varinjlim A_k \simeq D\oplus M
$$
where $D$ is a divisible group, 
and $M$ is a torsion group of bounded exponent;
\item[2)] the group $C[p^{\infty}]$ is torsion of bounded exponent.
\end{enumerate}
Then the exact sequence of direct limits \eqref{DLseq} splits.
\end{prop}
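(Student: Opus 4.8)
The plan is to prove each case separately, in both cases exhibiting a section of the surjection $B_\infty \to C_\infty$, where $C_\infty = C[p^\infty]$. The key structural input is that both extreme terms of the limit sequence \eqref{DLseq} have good divisibility/torsion properties, which lets me invoke the appropriate splitting criterion for abelian groups. Since the limit sequence is pure exact (it is a direct limit of the pure exact sequences $\Seq_k$, each of which is pure exact by the argument in the proof of the Main Lemma, and a direct limit of pure exact sequences is pure exact), the whole problem reduces to finding a structural reason why purity upgrades to splitting in each case.

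\medskip

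For case (2), where $C_\infty = C[p^\infty]$ is torsion of bounded exponent, the argument is essentially immediate from what has already been proved. If $C[p^\infty]$ is $p^N$-torsion for some $N$, then $C[p^\infty] = C[p^N]$ and the natural maps $C[p^k] \to C[p^\infty]$ are isomorphisms for $k \geq N$; hence $\varinjlim B_k$ stabilizes at the relevant term and the limit sequence \eqref{DLseq} is isomorphic to $\Seq_N$, which splits by Lemma~\ref{MainLemma}. More carefully, one should check that the transition maps on the $A_k$ and $B_k$ also stabilize, but this follows from the five lemma applied to the commuting diagram \eqref{S2} once the right-hand vertical maps become isomorphisms. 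So case (2) is really a corollary of the Main Lemma.

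\medskip

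For case (1), I would use the structure of $A_\infty := \varinjlim A_k \simeq D \oplus M$ with $D$ divisible and $M$ of bounded exponent. The standard tool is that a divisible subgroup is always a direct summand: the subsequence $0 \to D \to B_\infty \to B_\infty/D \to 0$ splits, so write $B_\infty \simeq D \oplus B'$ where $B' = B_\infty/D$. Passing to the quotient by $D$ kills the divisible part of $A_\infty$ and leaves a pure exact sequence $0 \to M \to B' \to C_\infty \to 0$. Now $M$ is of bounded exponent, hence by the first Prüfer theorem it is a direct sum of cyclic groups; the dual form of Lemma~\ref{pureSplit} (purity plus a bounded-exponent \emph{sub}object forces splitting, since $B'/M \simeq C_\infty$ and one can extend the bounded-exponent piece) then gives a splitting. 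Here I should be slightly careful: Lemma~\ref{pureSplit} is stated for the quotient $C$ being a direct sum of cyclics, not the sub $A$; the correct classical statement I want is that a pure subgroup of bounded exponent is a direct summand (Fuchs, Chap.~V), so I would cite that rather than reuse Lemma~\ref{pureSplit} verbatim.

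\medskip

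\textbf{The main obstacle} I anticipate is case (1), specifically pinning down the exact splitting criterion. The difficulty is that the good hypothesis lives on the \emph{sub}object $A_\infty$ rather than on the quotient $C_\infty$, so I cannot directly reuse the proof of the Main Lemma (which exploited that $C[p^n]$ is a direct sum of cyclics). The cleanest route is the two-step reduction above — first split off the divisible summand $D$ unconditionally, then handle the residual bounded-exponent pure subgroup $M$ — and the crux is the classical theorem that a pure subgroup of bounded exponent splits off, which I would invoke from Fuchs \cite[Chap.~V,~\S{}28]{fuchs70}. Verifying that the divisible summand really does split off (and commutes with the pushforward through the sequence) is routine but should be stated explicitly.
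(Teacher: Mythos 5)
Your first paragraph (purity of \eqref{DLseq}) and your case (1) are correct, and case (1) follows essentially the same route as the paper: split off the divisible part $D$ by injectivity, then handle the bounded part $M$. Where the paper disposes of the resulting sequence $0\to M\to B_\infty/D\to C[p^\infty]\to 0$ by a rather terse reference back to the sequences $\Seq_k$, you invoke the classical theorem that a pure subgroup of bounded exponent is a direct summand (Fuchs, Chap.~V); combined with the fact that purity passes to this pushed-forward sequence, this is a complete and arguably cleaner justification of that step.

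Case (2), as written, contains a genuine error. You claim that once the maps $C[p^k]\to C[p^{k+1}]$ are isomorphisms (for $k\geq N$), the five lemma applied to \eqref{S2} forces the transition maps on the $A_k$ and $B_k$ to become isomorphisms, so that \eqref{DLseq} is isomorphic to $\Seq_N$. The five lemma needs \emph{both} outer vertical maps to be isomorphisms before it says anything about the middle one, and the hypotheses of Lemma~\ref{MainLemma} place no constraint whatsoever on the maps $A_k\to A_{k+1}$. Concretely: take $C=\Z/p$, $A_k=\Z/p^k$ with transition maps $x\mapsto px$, and $B_k=A_k\oplus \Z/p$ with transition maps $(x,y)\mapsto (px,y)$, the maps in $\Seq_k$ being the obvious inclusion and projection. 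All hypotheses hold and $C[p^\infty]=\Z/p$ is bounded, yet $\varinjlim A_k\simeq \Z[1/p]/\Z$ is the Pr{\"u}fer group, so neither the $A_k$ nor the $B_k$ stabilize, and \eqref{DLseq} --- namely $0\to \Z[1/p]/\Z\to (\Z[1/p]/\Z)\oplus\Z/p\to \Z/p\to 0$ --- is not isomorphic to any $\Seq_N$. The conclusion of case (2) is still true, and the repair is immediate from your own first paragraph: \eqref{DLseq} is pure exact, and its quotient $C[p^\infty]=C[p^N]$ is a direct sum of cyclic groups by the first Pr{\"u}fer theorem, so Lemma~\ref{pureSplit} applies directly. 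Equivalently, the lifting argument in the proof of Lemma~\ref{MainLemma} works verbatim with $B_\infty$ in place of $B_n$, since that argument only uses that each intermediate $B_k$ is $p^k$-torsion, never that the middle term of the final sequence is bounded; this purity route is what the paper means by ``the result then follows from Lemma~\ref{MainLemma}''.
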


\begin{proof}
1) The exact sequence \eqref{DLseq} can be decomposed in two exact sequences
\begin{equation}
\label{LimSp}
\begin{CD}
0 @>>> D @>>> (\varinjlim B_k)' @>>> C[p^{\infty}] @>>> 0 \\
\end{CD}
\end{equation}
and
\begin{equation}
\label{LimSpp}
\begin{CD}
0 @>>> M @>>> (\varinjlim B_k)'' @>>> C[p^{\infty}] @>>> 0 \\
\end{CD}
\end{equation}
The sequence \eqref{LimSp} splits because $D$ is divisible, hence is an injective abelian group. The sequence \eqref{LimSpp} splits because $M$ being torsion of bounded exponent, this sequence is induced by one of the sequences~$\Seq_k$, which all split by Lemma~\ref{MainLemma}. Therefore, the sequence \eqref{DLseq} splits.

2) The group $C[p^{\infty}]$ being torsion of bounded exponent, there exists an integer $n$ such that $C[p^{\infty}]=C[p^n]$. The result then follows from Lemma~\ref{MainLemma}.
\end{proof}


\subsection{Dual version of the Main Lemma}

Our Lemma has a dual version as follows.

\begin{lem}[Dual of Main Lemma]
\label{CoMainLemma}
Let $p$ be a prime number, let $n>0$ be an integer, and let $A$ be an abelian group.

Assume that we have, for each $0<k\leq n$, an exact sequence $\Seq_k$ of $\Z/p^k$-modules
\begin{equation}
\tag{$\Seq_k$}
\begin{CD}
0 @>>> A/p^k @>>> B_k @>>> C_k @>>> 0 \\
\end{CD}
\end{equation}
together with a sequence of maps between these sequences
\begin{equation}
\begin{CD}
0 @>>> A/p^{k+1} @>>> B_{k+1} @>>> C_{k+1} @>>> 0 \\
@. @VVV @VVV @VVV \\
0 @>>> A/p^k @>>> B_k @>>> C_k @>>> 0 \\
\end{CD}
\end{equation}
where the vertical map on the left is the canonical surjection $A/p^{k+1}\twoheadrightarrow A/p^k$.

Then the exact sequence $\Seq_n$ splits.
\end{lem}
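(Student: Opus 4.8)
The plan is to dualise the proof of the Main Lemma. Rather than exhibiting the quotient term as a direct sum of cyclic groups and checking that the quotient side is pure, I would show that $\Seq_n$ is again pure exact in the sense of Definition~\ref{pure_exact_dfn}, and then invoke the \emph{dual} of Lemma~\ref{pureSplit}: a bounded pure subgroup of an abelian group is a direct summand (Kulikov's theorem, \cite[Chap.~V,~\S{}27]{fuchs70}). Since $A/p^n$ is $p^n$-torsion, hence of bounded exponent, this reduces the entire statement to the single assertion that the canonical image of $A/p^n$ is a pure subgroup of $B_n$.

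To verify purity, I identify $A/p^n$ with its image under the injection $\iota_n\colon A/p^n\hookrightarrow B_n$, so that it suffices to check $(A/p^n)\cap p^iB_n\subseteq p^i(A/p^n)$ for every $i$. The reverse inclusion is automatic; the cases $i=0$ and $i\geq n$ are trivial, since both $A/p^n$ and $B_n$ are $p^n$-torsion; and any factor of $m$ prime to $p$ acts invertibly on these $\Z/p^n$-modules, so only the powers $p^i$ with $0<i<n$ need attention. The crucial step—and the one place where the tower of maps is used—is as follows. Fix $\bar a\in A/p^n$ with $\iota_n(\bar a)=p^ib$ for some $b\in B_n$, and compose the given maps between consecutive sequences to obtain a morphism from $\Seq_n$ down to $\Seq_i$; on the left-hand terms this composite is the canonical surjection $\pi\colon A/p^n\twoheadrightarrow A/p^i$, and on the middle terms a homomorphism $\beta\colon B_n\to B_i$, with $\iota_i\circ\pi=\beta\circ\iota_n$. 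Then
\[
\iota_i(\pi(\bar a))=\beta(\iota_n(\bar a))=\beta(p^ib)=p^i\beta(b)=0,
\]
because $B_i$ is $p^i$-torsion. As $\iota_i$ is injective this forces $\pi(\bar a)=0$, that is $\bar a\in\ker\pi=p^i(A/p^n)$ (the identification of this kernel with $p^i(A/p^n)$ being valid because $i\le n$), which is exactly the desired inclusion. This establishes that $\Seq_n$ is pure exact, and the splitting then follows from the bounded-pure-subgroup criterion above.

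The main obstacle I anticipate is not any individual computation but getting the correct dual formulation. One must replace the ``lift an element of prescribed order up the tower'' argument of Lemma~\ref{MainLemma} by a ``push a divisible element down the tower until it is annihilated'' argument, exploiting that $B_i$ is $p^i$-torsion precisely when the element has been pushed down to index $i$; and one must pair pure exactness with the right splitting criterion, namely the bounded-\emph{kernel} version (Kulikov) rather than the cyclic-\emph{quotient} version of Lemma~\ref{pureSplit}. The only point needing a small separate verification is the identification $\ker\bigl(A/p^n\to A/p^i\bigr)=p^i(A/p^n)$, which holds since $p^nA\subseteq p^iA$ for $i\le n$.
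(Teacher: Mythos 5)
Your proof is correct, and it takes a genuinely different route from the paper's. The paper deduces this lemma from Lemma~\ref{MainLemma} by Pontryagin duality: applying $\Hom(-,\Q/\Z)$ to the tower produces exact sequences $\widehat{\Seq_k}$ whose quotient terms are $\Hom(A,\Q/\Z)[p^k]$ and whose right-hand vertical maps are the canonical inclusions, so Lemma~\ref{MainLemma} shows that $\widehat{\Seq_n}$ splits, and the paper concludes that $\Seq_n$ splits. You instead argue entirely on the kernel side: pushing an element $\bar a$ with $\iota_n(\bar a)=p^i b$ down the tower to $\Seq_i$, where $B_i$ is $p^i$-torsion, forces $\pi(\bar a)=0$, i.e.\ $\bar a\in\ker\pi=p^i(A/p^n)$; hence the image of $A/p^n$ is pure in $B_n$, and Kulikov's theorem that a bounded pure subgroup is a direct summand \cite[Chap.~V,~\S{}27]{fuchs70} yields the splitting. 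This is an exact mirror of the proof of Lemma~\ref{MainLemma} (purity plus a splitting criterion, applied to the subgroup rather than the quotient), and your verification of purity, including the identification $\ker\bigl(A/p^n\to A/p^i\bigr)=p^i(A/p^n)$ and the reduction to prime-power divisors, is complete. As for what each approach buys: the paper's route is shorter because it recycles Lemma~\ref{MainLemma} wholesale, but its final step --- ``$\widehat{\Seq_n}$ splits, hence $\Seq_n$ splits'' --- is not a purely formal consequence of duality: in general a short exact sequence whose Pontryagin dual splits is only pure exact (for instance, the non-split pure sequence of \S{}\ref{exemplePierre} has split dual, since the dual of a pure sequence has algebraically compact kernel), so justifying that step in full rigour requires exactly the boundedness-plus-purity argument that you make explicit. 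Your version is therefore more elementary and self-contained, at the cost of redoing a purity computation instead of quoting the Main Lemma.
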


\begin{proof}
The group $\Q/\Z$ being injective, for each $0<k\leq n$ we obtain by applying the functor $\Hom(-,\Q/\Z)$ to $\Seq_k$ an exact sequence of $\Z/p^k$-modules
\begin{equation*}
\begin{CD}
0 @>>> \Hom(C_k,\Q/\Z) @>>> \Hom(B_k,\Q/\Z) @>>> \Hom(A/p^k,\Q/\Z) @>>> 0 \\
\end{CD}
\end{equation*}
that we denote by $\widehat{\Seq_k}$ (this is the Pontryagin dual of $\Seq_k$). We note that $\Hom(A/p^k,\Q/\Z)=\Hom(A,\Q/\Z)[p^k]$, and that the map $\Seq_{k+1}\to \Seq_k$ induces a map $\widehat{\Seq_k}\to\widehat{\Seq_{k+1}}$. Therefore, the hypothesis of Lemma~\ref{MainLemma} are satisfied by the family $\widehat{\Seq_k}$, and it follows that $\widehat{\Seq_n}$, hence $\Seq_n$, splits.
\end{proof}


\subsection{Proof of Theorem~\ref{coh_splitting}}

By the Chinese remainder theorem, it suffices to consider the case where $m$ is a power of some prime number $p$. We therefore let $m=p^n$. The map $[p^n]:N\to N$ being surjective, the same holds for the map $[p^k]:N\to N$ for each $0<k\leq n$.

For each $0<k\leq n$, we have an obvious commutative diagram with exact lines
$$
\begin{CD}
0 @>>> N[p^k] @>>> N @>[p^k]>> N @>>> 0 \\
@. @VVV @| @VV[p]V \\
0 @>>> N[p^{k+1}] @>>> N @>[p^{k+1}]>> N @>>> 0. \\
\end{CD}
$$
Applying cohomology on both lines yelds a commutative diagram with exact lines
$$
\begin{CD}
\dots\longrightarrow @. R^{i-1}F(N) @>\delta_i^k>> R^iF(N[p^k]) @>>> R^iF(N) @>[p^k]>> R^iF(N) @. \longrightarrow \cdots \\
@. @V[p]VV @VVV @| @VV[p]V \\
\dots\longrightarrow @. R^{i-1}F(N) @>\delta_i^{k+1}>> R^iF(N[p^{k+1}]) @>>> R^iF(N) @>[p^{k+1}]>> R^iF(N) @. \longrightarrow \cdots \\
\end{CD}
$$
where the $\delta_i$ are the connecting homomorphisms. One deduces a commutative diagram whose lines are short exact sequences
$$
\begin{CD}
0 @>>> R^{i-1}F(N)/p^k @>>> R^iF(N[p^k]) @>>> R^iF(N)[p^k] @>>> 0 \\
@. @VVV @VVV @VVV \\
0 @>>> R^{i-1}F(N)/p^{k+1} @>>> R^iF(N[p^{k+1}]) @>>> R^iF(N)[p^{k+1}] @>>> 0 \\
\end{CD}
$$
where the vertical map on the left is induced by the map $[p]:R^{i-1}F(N)\to R^{i-1}F(N)$, and the vertical map on the right is the natural inclusion. This proves that the sequence \eqref{cohomo_seq} fits into a family of sequences satisfying the hypothesis of Lemma~\ref{MainLemma}, hence splits.


\subsection*{Acknowledgments}

The first author was supported by the CIMI Excellence program while visiting the \emph{Centro di Ricerca Matematica Ennio De Giorgi} during the autumn of 2017. The second author was partially supported by CONICYT, Proyectos Regulares FONDECYT n\textsuperscript{o}~1150595, and by project number P27600 of the Austrian Science Fund (FWF).

The authors warmly thank Christian Wuthrich for providing the material of \S{}\ref{subsect_Chris}, and for sharing his valuable comments on our work. We also thank Bjorn Poonen for pointing out the paper \cite{BKLP15}, and for stimulating email exchange.





\bigskip

Jean Gillibert, Institut de Math{\'e}matiques de Toulouse, CNRS UMR 5219, 118, route de Narbonne, 31062 Toulouse Cedex 9, France.

\emph{Email address:} \texttt{jean.gillibert@math.univ-toulouse.fr}
\medskip

Pierre Gillibert, Institut f{\"u}r Diskrete Mathematik \& Geometrie, Technische Universit{\"a}t Wien, Wien,  {\"O}sterreich.

\emph{Email address:} \texttt{pgillibert@yahoo.fr}


\end{document}